\def\BibTeX{{\rm B\kern-.05em{\sc i\kern-.025em b}\kern-.08em
    T\kern-.1667em\lower.7ex\hbox{E}\kern-.125emX}}
\newtheorem{thm}{Theorem}[section]
\newtheorem{lem}[thm]{Lemma}
\newtheorem{prop}[thm]{Proposition}
\newtheorem{cond}[thm]{Condition}
\theoremstyle{definition}
\theoremstyle{remark}
\newtheorem{rem}{Remark}[section]
\newtheorem{exmp}{Example}[section]
\numberwithin{equation}{section}
    \newcommand{\EE}{\mathbb{E}}
    \newcommand{\ZZ}{\mathbb{Z}}
    \newcommand{\Exp}{\operatorname{E}}
    \newcommand{\E}{\Exp}
    \renewcommand{\Pr}{\operatorname{P}}
    \newcommand{\eqd}{\stackrel{d}{=}}
    \newcommand{\dto}{\xrightarrow{d}}
    \newcommand{\wto}{\xrightarrow{w}}
    \newcommand{\vto}{\xrightarrow{v}}
    \newcommand{\fidi}{\xrightarrow{\text{fidi}}}
    \newcommand{\eind}{\stackrel{d}{=}}
\begin{document}

\title[Weak convergence of partial maxima processes] % running head version
{Weak convergence of partial maxima processes in the $M_{1}$ topology}

%%
% See the complete version of this file, testart.tex, for more
% complicated examples
%
\author{Danijel Krizmani\'{c}}

\address{Danijel Krizmani\'{c}\\ Department of Mathematics\\
        University of Rijeka\\
        Radmile Matej\v{c}i\'{c} 2, 51000 Rijeka\\
        Croatia}
%% Note the doubled @@:
\email{dkrizmanic@math.uniri.hr}
%\thanks{Research supported in part by NSF grant
%CCR-87-10433 and DARPA Contract N00019-89-J-1988.}

%\date{July 25, 1994}

\subjclass[2010]{Primary 60F05, 60F17; Secondary 60G52, 60G70}
\keywords{Functional limit theorem, Regular variation, Extremal process, $M_{1}$ topology, Weak convergence}

%\maketitle

\begin{abstract}
It is known that for a sequence of independent and identically
distributed random variables $(X_{n})$ the regular variation
condition is equivalent to weak convergence of partial maxima
$M_{n}= \max\{X_{1}, \ldots, X_{n}\}$, appropriately scaled. A
functional version of this is known to be true as well, the limit
process being an extremal process, and the convergence takes place
in the space of c\`{a}dl\`{a}g functions endowed with the Skorohod
$J_{1}$ topology. We first show that weak convergence of partial
maxima $M_{n}$ holds also for a class of weakly dependent
sequences under the joint regular variation condition. Then using
this result we obtain a corresponding functional version for the
processes of partial maxima $M_{n}(t) = \bigvee_{i=1}^{\lfloor nt
\rfloor}X_{i},\,t \in [0,1]$, but with respect to the Skorohod $M_{1}$
topology, which is weaker than the more usual $J_{1}$ topology. We
also show that the $M_{1}$ convergence generally can not be replaced by the $J_{1}$ convergence. Applications of our main results to moving
maxima, squared GARCH and ARMAX processes are also given.
\end{abstract}

\maketitle

\section{Introduction}
\label{intro}

Let $(X_{i})$ be a strictly stationary sequence of nonnegative
random variables and denote by $M_{n} = \max \{X_{i} :
i=1,\ldots,n\}$, $n \geq 1$, its accompanying sequence of partial
maxima.
 It is
well known that in the i.i.d. case weak convergence of $M_{n}$ is
equivalent to the regular variation property of $X_{1}$. More precisely, let
$(a_{n})$ be a sequence of positive real numbers
such that
 \begin{equation}\label{eq:an}
 n \Pr(X_{1}>a_{n}) \to 1 \qquad \textrm{as} \ n \to \infty,
 \end{equation}
   and $\mu$ a measure of the form
  $$ \mu(dx) = \alpha x^{-\alpha -1}1_{(0,\infty)}(x)\,dx$$
  for some $\alpha >0$. Then
\begin{equation}\label{eq:regvar}
 n \Pr \Big( \frac{X_{1}}
{a_{n}} \in \cdot \Big) \vto \mu (\,\cdot\,)
\end{equation}
  is equivalent to
 \begin{equation*}
\frac{M_{n}}{a_{n}} \dto Y_{0},
 \end{equation*}
 where $Y_{0}$ is a random variable with Fr\'{e}chet distribution
 $$ \Pr(Y_{0} \leq x) = e^{-\mu((x,\infty))} = e^{-x^{-\alpha}}, \qquad x \geq 0$$
 (see for example Resnick~\cite{Resnick07}, Proposition 7.1).
 The arrow $" \vto"$ in (\ref{eq:regvar}) denotes vague convergence of measures on
 $\mathbb{E} = (0, \infty]$. The regular variation
 property (\ref{eq:regvar}) is equivalent to
 $$ \Pr(X_{1} > x) = x^{-\alpha} L(x), \qquad x >0,$$
 where $L(\,\cdot\,)$ is a slowly varying function at $\infty$.

 In the i.i.d. case the regular variation property  (\ref{eq:regvar}) is also
 equivalent to the functional convergence of stochastic
 processes of partial maxima of $(X_{n})$, i.e.
 \begin{equation}
   M_{n}(\,\cdot\,) = \bigvee_{i=1}^{\lfloor n \cdot
   \rfloor}\frac{X_{i}}{a_{n}} \dto Y_{0}(\,\cdot\,)
 \end{equation}
in $D[0,1]$, the space of real-valued c\`{a}dl\`{a}g functions on
$[0,1]$, with the Skorohod $J_{1}$ topology, where $Y_{0}(\,\cdot\,)$
is an extremal process with exponent measure $\mu$, therefore with
marginal distributions
$$\Pr (Y_{0}(t) \leq x) = e^{-t
\mu((x,\infty))} = e^{-tx^{-\alpha}}, \quad x \geq 0,\,t \in
[0,1].$$
This result was proved by Lamperti~\cite{La64} (see also Resnick~\cite{Resnick07}, Proposition 7.2). For convenience we can put  $M_{n}(t)= X_{1}/a_{n}$ (or $M_{n}(t)=0$) for $t \in [0, 1/n)$. Weissman~\cite{We75} studied generalizations of
 this result to random variables which need not be identically distributed.
As for the dependent case, Adler~\cite{Ad78} obtained $J_{1}$ extremal functional convergence with the weak dependence condition similar to "asymptotical independence" condition introduced by Leadbetter~\cite{Le74} (see also Leadbetter~\cite{Le76}). $J_{1}$ functional convergence of sample extremal processes of moving averages was obtained by Davis and Resnick~\cite{DaRe85} with the noise having regularly varying tail probabilities, and by Jordanova~\cite{Jo09} with the noise in the Weibull max-domain of attraction. In the recent years different functional limit theorems for extremal processes subordinated to random time were obtained, see for instance Silvestrov and Teugels~\cite{SiTe98}; Meerschaert and Stoev~\cite{MeSt09}.

In this paper, under the properties of weak dependence and joint
regular variation with index $\alpha \in
(0,\infty)$ for the sequence $(X_{n})$, we investigate the asymptotic distributional behavior of
extremes $M_{n}$ and processes $M_{n}(\,\cdot\,)$. Since we study
extremes of random processes, nonnegativity of random variables
$X_{n}$ in reality is not a restrictive assumption. First, we introduce the essential
ingredients about regular variation and weak dependence in Section
\ref{S:Preliminaries}. In Section \ref{S:Weakconv} we prove the so
called timeless result on weak convergence of scaled extremes
$M_{n}$, based on a point process convergence obtained by Davis and Hsing~\cite{DH95}. Using this result and a limit theorem derived by Basrak et al.~\cite{BKS} for a certain time-space point processes,
in Section \ref{S:FLT} we prove a functional limit theorem for
processes of partial maxima $M_{n}(\,\cdot\,)$ in the space $D[0,1]$
endowed with the Skorohod $M_{1}$ topology, which is weaker than the frequently used
$J_{1}$ topology. The used methods are partly based on the work of Basrak et al.~\cite{BKS} for partial sums. Finally, in Section
\ref{S:Examples} we discuss several examples of stationary
sequences covered by our functional limit theorem, and show that
the $M_{1}$ convergence, in general, can not be replaced by the $J_{1}$ convergence.

\section{Preliminaries on regular variation, point processes and weak dependence}
\label{S:Preliminaries}

In this section we introduce the basic notions and results on regular variation and point processes that will be required for the results in the following sections.

Multivariate regular variation or regular variation on $\mathbb{R}_{+}^{d} = [0, \infty)^{d}$ for random vectors is typically formulated in terms of vague convergence on $\EE^{d}= [0, \infty]^{d} \setminus \{ \textbf{0} \}$. The topology on
$\EE^{d}$ is chosen so that a set $B \subseteq \EE^{d}$
has compact closure if and only if it is bounded away from zero,
that is, if there exists $u > 0$ such that $B \subseteq \EE^{d}_u = \{ \textbf{x}
\in \EE^{d} : \|\textbf{x}\| >u \}$. Here $\| \cdot \|$ denotes the max-norm on $\mathbb{R}_{+}^{d}$, i.e.\
$\displaystyle \|\textbf{x}\|=\max \{ x_{i} : i=1, \ldots , d\}$ where
$\textbf{x}=(x_{1}, \ldots, x_{d}) \in \mathbb{R}_{+}^{d}$.

The vector $\boldsymbol{\xi}$ with values in $\mathbb{R}_{+}^{d}$ is (multivariate) regularly varying with index $\alpha >0$ if there exists a random vector $\boldsymbol{\Theta}$
on the unit sphere $\mathbb{S}_{+}^{d-1} = \{ \textbf{x} \in \mathbb{R}_{+}^{d} :
\|\textbf{x}\|=1 \}$ in $\mathbb{R}_{+}^{d}$, such that for every $u \in (0,\infty)$
 \begin{equation}\label{e:regvar1}
   \frac{\Pr(\|\boldsymbol{\xi}\| > ux,\,\boldsymbol{\xi} / \| \boldsymbol{\xi} \| \in \cdot \, )}{\Pr(\| \boldsymbol{\xi} \| >x)}
    \wto u^{-\alpha} \Pr( \boldsymbol{\Theta} \in \cdot \,)
 \end{equation}
as $x \to \infty$, where the arrow "$\wto$" denotes weak convergence of finite measures.
Equivalently, $\boldsymbol{\xi}$ is regularly varying if there exists a sequence of positive real numbers $(a_{n})$ tending to infinity and a non-null Radon measure $\mu$ on $\mathcal{B}(\EE^{d})$ such that $\mu(\EE^{d} \setminus \mathbb{R}_{+}^{d})=0$ and
$$ n \Pr ( a_{n}^{-1} \boldsymbol{\xi} \in \cdot\,) \vto \mu (\,\cdot\,).$$

We say that a strictly stationary $\mathbb{R}_{+}$--valued process $(\xi_{n})$ is \emph{jointly regularly varying} with index
$\alpha \in (0,\infty)$ if for any nonnegative integer $k$ the
$k$-dimensional random vector $\boldsymbol{\xi} = (\xi_{1}, \ldots ,
\xi_{k})$ is multivariate regularly varying with index $\alpha$.

Theorem 2.1 in Basrak and Segers~\cite{BaSe} provides a convenient
characterization of joint regular variation: it is necessary and
sufficient that there exists a process $(Y_n)_{n \in \mathbb{Z}}$
with $\Pr(Y_0 > y) = y^{-\alpha}$ for $y \geq 1$ such that as
$x \to \infty$,
\begin{equation}\label{e:tailprocess}
  \bigl( (x^{-1}\xi_n)_{n \in \ZZ} \, \big| \, \xi_0 > x \bigr)
  \fidi (Y_n)_{n \in \ZZ},
\end{equation}
where "$\fidi$" denotes convergence of finite-dimensional
distributions. The process $(Y_{n})$ is called
the \emph{tail process} of $(\xi_{n})$.

Let $(X_{i})$ be a strictly stationary sequence of nonnegative
random variables and assume it is jointly regularly varying with index $\alpha \in (0, \infty)$.
The property of joint regular variation is a corner stone in obtaining the weak convergence of point processes $N_{n}$ given by
$$N_{n} = \sum_{i=1}^{n}\delta_{X_{i}/a_{n}}, \qquad \ n \in \mathbb{N},$$
with $a_{n}$ as in (\ref{eq:an}). These point processes play a fundamental role in obtaining the limit theorem for scaled extremes $M_{n}$. On the other part the time-space point processes
\begin{equation}
\label{E:ppspacetime}
  N^{*}_{n} = \sum_{i=1}^{n} \delta_{(i / n,\,X_{i} / a_{n})}, \qquad n \in \mathbb{N},
\end{equation}
are connected with the functional limit theorem for processes of partial maxima $M_{n}(\,\cdot\,)$.

To control the dependence in the sequence $(X_n)$ we first have to assume that clusters of large values of $X_{n}$ do not last for too long.
\begin{cond}
\label{c:anticluster}
There exists a sequence of positive integers $(r_{n})$ such that $r_{n} \to \infty $ and $r_{n} / n \to 0$ as $n \to \infty$ and such that for every $u > 0$,
\begin{equation}
\label{e:anticluster}
  \lim_{m \to \infty} \limsup_{n \to \infty}
  \Pr \biggl( \max_{m \leq |i| \leq r_{n}} X_{i} > ua_{n}\,\bigg|\,X_{0}>ua_{n} \biggr) = 0.
\end{equation}
\end{cond}
Under the finite-cluster Condition~\ref{c:anticluster} the following value
\begin{equation}\label{e:theta}
   \theta  = \lim_{r \to \infty} \lim_{x \to \infty} \Pr \bigl(M_{r} \leq x \, \big| \, X_{0}>x \bigr)
\end{equation}
is strictly positive, and it is equal to the extremal index of the sequence $(X_{n})$ (see Basrak and Segers~\cite{BaSe}). The extremal index can be interpreted as the reciprocal mean cluster size of large exceedances (cf. Hsing et al.~\cite{HHL88}). Clustering of extreme values occurs when $\theta < 1$.

The weak dependence condition appropriate for our considerations is the mixing condition called $\mathcal{A}'(a_n)$ which is slightly stronger than the condition~$\mathcal{A}(a_n)$ introduced in Davis and Hsing~\cite{DH95}. Condition $\mathcal{A}'(a_n)$ is implied by the strong mixing property, see Krizmani'{c}~\cite{Kr10}. Recall that a sequence of random variables $(\xi_{n})$ is strongly mixing if $\alpha (n) \to 0$ as $n \to \infty$, where
$$\alpha (n) = \sup \{|\Pr (A \cap B) - \Pr(A) \Pr(B)| : A \in \mathcal{F}_{1}^{j}, B \in \mathcal{F}_{j+n}^{\infty}, j=1,2, \ldots \}$$
and $\mathcal{F}_{k}^{l} = \sigma( \{ \xi_{i} : k \leq i \leq l \} )$ for $1 \leq k \leq l \leq \infty$.

\begin{cond}[$\mathcal{A}'(a_{n})$]
\label{c:mixcond}
There exists a sequence of positive integers $(r_{n})$ such that $r_{n} \to \infty $ and $r_{n} / n \to 0$ as $n \to \infty$ and such that for every nonnegative continuous function $f$ on $[0,1] \times \EE$ with compact support, denoting $k_{n} = \lfloor n / r_{n} \rfloor$, as $n \to \infty$,
\begin{equation}\label{e:mixcon}
 \Exp \biggl[ \exp \biggl\{ - \sum_{i=1}^{n} f \biggl(\frac{i}{n}, \frac{X_{i}}{a_{n}}
 \biggr) \biggr\} \biggr]
 - \prod_{k=1}^{k_{n}} \Exp \biggl[ \exp \biggl\{ - \sum_{i=1}^{r_{n}} f \biggl(\frac{kr_{n}}{n}, \frac{X_{i}}{a_{n}} \biggr) \biggr\} \biggr] \to 0.
\end{equation}
\end{cond}

Under joint regular variation and Conditions \ref{c:anticluster} and \ref{c:mixcond}, by Theorem 2.7 in Davis and Hsing~\cite{DH95} we obtain that the point processes $N_{n}$, $n \in \mathbb{N}$, converge in distribution to some $N$, which by Theorem 2.3 and Corollary 2.4 in Davis and Hsing~\cite{DH95} has the following cluster representation
\begin{equation}\label{eq:Nnconv}
 N \eind \sum_{i} \sum_{j} \delta_{P_{i}Q_{ij}},
 \end{equation}
where $\sum_{i=1}^{\infty}\delta_{P_{i}}$ is a Poisson process
with intensity measure $\nu$ given by $\nu(dy) = \theta \alpha
y^{-\alpha-1}1_{(0,\infty)}(y)\,dy$, and $\sum_{j= 1}^{\infty}\delta_{Q_{ij}}$, $i \geq 1$,
are i.i.d. point processes on $[0,1]$ whose points satisfy $\sup_{j}Q_{ij}=1$, and all point processes are mutually
independent. The distribution of the point processes $\sum_{j= 1}^{\infty}\delta_{Q_{ij}}$ is described in Davis and Hsing~\cite{DH95}.

Conditions \ref{c:anticluster} and \ref{c:mixcond}, by Theorem 2.3 in Basrak et al.~\cite{BKS},
also imply convergence in distribution of the point processes $N_{n}^{*}$ on the set
$[0,1] \times \EE_{u}$ for every $u \in (0,\infty)$, where $\EE_{u}= (u,\infty]$.
More precisely, under the above conditions, for every $u \in (0,\infty)$, as $n \to \infty$,
\begin{equation}\label{e:Nn*conv}
N_{n}^{*} \bigg|_{[0, 1] \times \EE_u} \dto N^{(u)}
    = \sum_i \sum_j \delta_{(T^{(u)}_i, u Z_{ij})} \bigg|_{[0, 1] \times \EE_u}\,
\end{equation}
in $[0, 1] \times \EE_u$, where $\sum_i \delta_{T^{(u)}_i}$ is a
homogeneous Poisson process on $[0, 1]$ with intensity $\theta
u^{-\alpha}$, and $(\sum_j \delta_{Z_{ij}})_i$ is an i.i.d.\
sequence of point processes in $\EE$, independent of $\sum_i
\delta_{T^{(u)}_i}$, and with common distribution equal to the
distribution of
$$\biggl( \sum_{n \in \mathbb{Z}} \delta_{Y_n} \, \bigg| \, \sup_{i \leq -1} Y_i \leq 1 \biggr),$$
where $(Y_{n})$ is the tail process of the sequence $(X_{n})$.

For a detailed discussion on joint regular variation and dependence conditions \ref{c:anticluster} and \ref{c:mixcond} we refer to Basrak et al.~\cite{BKS}, Section 3.4.

\section{Weak convergence of partial maxima $M_{n}$}
\label{S:Weakconv}

In this section we establish convergence of the partial maxima $M_{n}$ for a class of weakly dependent sequences. Precisely, let $(X_{n})$ be a strictly stationary sequence of nonnegative random variables, jointly regularly varying with index $\alpha \in (0, \infty)$ and assume Conditions \ref{c:anticluster} and \ref{c:mixcond} hold. Then by (\ref{eq:Nnconv}) it holds that, as $n \to \infty$,
$$ N_{n} = \sum_{i=1}^{n}\delta_{X_{i}/a_{n}} \dto N = \sum_{i} \sum_{j} \delta_{P_{i}Q_{ij}},$$
where $(a_{n})$ is chosen as in (\ref{eq:an}). Denote by
$\mathbf{M}_{p}(\EE)$ the space of Radon point measures on $\EE$
equipped with the vague topology. Recall $M_{n} =
\bigvee_{i=1}^{n} X_{i}$.

\begin{thm}\label{t:notimeconv}
Let $(X_{n})$ be a strictly stationary sequence of nonnegative random variables, jointly regularly varying with index $\alpha\in(0, \infty)$. Suppose that Conditions~\ref{c:anticluster} and \ref{c:mixcond} hold. Then, as $n \to \infty$,
$$ \frac{M_{n}}{a_{n}} \dto M,$$
where the limit $M$ is a Fr\'{e}chet random variable with
$$ \Pr(M \leq x) = e^{-\theta x^{-\alpha}}, \qquad x >0.$$
\end{thm}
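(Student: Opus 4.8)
The plan is to read off the limiting law of $M_{n}/a_{n}$ directly from the point process convergence $N_{n} \dto N$ recorded in~(\ref{eq:Nnconv}), which holds under the present hypotheses. The starting observation is that, since the $X_{i}$ are nonnegative, $M_{n}/a_{n}$ is exactly the largest location among the atoms of $N_{n} = \sum_{i=1}^{n}\delta_{X_{i}/a_{n}}$, so that for every $x>0$,
\[
  \Pr\!\left( \frac{M_{n}}{a_{n}} \leq x \right) \;=\; \Pr\bigl( N_{n}((x,\infty]) = 0 \bigr).
\]

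The set $(x,\infty]$ is relatively compact in $\EE=(0,\infty]$ (it is bounded away from $0$) and has topological boundary $\{x\}$. Hence, provided $N(\{x\})=0$ almost surely, the convergence $N_{n}\dto N$ in $\mathbf{M}_{p}(\EE)$ together with the continuous mapping theorem gives $N_{n}((x,\infty])\dto N((x,\infty])$, and in particular $\Pr(N_{n}((x,\infty])=0)\to\Pr(N((x,\infty])=0)$. The boundary condition $N(\{x\})=0$ a.s.\ is immediate from the cluster representation: conditionally on the i.i.d.\ cluster processes $(\sum_{j}\delta_{Q_{ij}})_{i}$, the collection of atom locations $\{P_{i}Q_{ij}\}$ is countable, whereas the independent Poisson process $\sum_{i}\delta_{P_{i}}$ has the diffuse intensity $\nu$ and therefore a.s.\ charges no fixed countable set.

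It then remains to compute $\Pr(N((x,\infty])=0)$ from the cluster structure, and this is where the essential algebraic point enters: since $\sup_{j}Q_{ij}=1$ a.s.\ for every $i$, one has $\sup_{j}P_{i}Q_{ij}=P_{i}$, so that, up to a null event,
\[
  \{ N((x,\infty]) = 0 \} \;=\; \bigcap_{i}\{ P_{i}\leq x \} \;=\; \Bigl\{ \textstyle\sum_{i}\delta_{P_{i}}\bigl((x,\infty]\bigr)=0 \Bigr\}.
\]
Because $\sum_{i}\delta_{P_{i}}$ is a Poisson process with intensity $\nu(dy)=\theta\alpha y^{-\alpha-1}1_{(0,\infty)}(y)\,dy$, the last probability equals $e^{-\nu((x,\infty])}$, and $\nu((x,\infty])=\int_{x}^{\infty}\theta\alpha y^{-\alpha-1}\,dy=\theta x^{-\alpha}$. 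Combining the two displays yields $\Pr(M_{n}/a_{n}\leq x)\to e^{-\theta x^{-\alpha}}$ for every $x>0$. Since $\theta>0$, the function $x\mapsto e^{-\theta x^{-\alpha}}1_{(0,\infty)}(x)$ is a genuine (continuous) distribution function, and because $M_{n}/a_{n}$ takes nonnegative values the pointwise convergence on $(0,\infty)$ upgrades to convergence in distribution, identifying $M$ as the stated Fr\'{e}chet variable.

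I do not expect a real obstacle here; morally the statement is just a one-dimensional projection of the point process limit. The only two points that need a line of care are the continuous mapping step — verifying that the limit process puts no mass on the single level $x$, which reduces to diffuseness of $\nu$ — and the observation that the normalization $\sup_{j}Q_{ij}=1$ is precisely what collapses the cluster count over $(x,\infty]$ to the count of the underlying Poisson points; this is what makes the extremal index $\theta$ appear in the exponent rather than $1$.
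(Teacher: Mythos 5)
Your argument is correct, but it takes a genuinely different route from the paper's. You read the limiting law directly from void probabilities: since $\{M_{n}/a_{n}\leq x\}=\{N_{n}((x,\infty])=0\}$ exactly, the convergence $N_{n}\dto N$ together with the fact that $N$ has no fixed atoms gives $\Pr(N_{n}((x,\infty])=0)\to\Pr(N((x,\infty])=0)$, and the normalization $\sup_{j}Q_{ij}=1$ collapses the latter to the Poisson avoidance probability $e^{-\theta x^{-\alpha}}$. (For the no-fixed-atom step the paper simply cites Lemma 2.1 of Davis and Hsing; your conditional argument reaches the same conclusion, but since the countable set that $P_{i}$ must avoid depends on $i$ through its own marks, it is cleaner to phrase it via the marking theorem for the marked Poisson process $\sum_{i}\delta_{(P_{i},(Q_{ij})_{j})}$, or just to quote the cited lemma.) The paper instead applies the continuous mapping theorem to the truncated maximum functional $T_{\epsilon}$, obtaining $M_{n}[\epsilon,\infty)\dto M[\epsilon,\infty)$ as in (\ref{e:convTeps}), lets $\epsilon\to 0$ using (\ref{e:convas}), and controls the discarded small points by a Markov-inequality/Karamata estimate combined with Theorem 3.5 of Resnick; the final identification of the law of $M$ is the same Poisson computation as yours. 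Your route is shorter precisely because the event $\{M_{n}/a_{n}\leq x\}$ depends only on points above the fixed level $x>0$, so no control of small values is needed at all. What the paper's longer argument buys is the auxiliary material reused later: the convergence (\ref{e:convTeps}) for a fixed truncation level and the monotone limit (\ref{e:convas}) are invoked in the proof of Theorem~\ref{t:functconvergence} to identify $M^{(u)}(1)\eqd M(u,\infty)$ and deduce (\ref{e:convnuu}), i.e.\ $\nu^{(u)}(x,\infty)\to\theta x^{-\alpha}$. Your proof is complete for the theorem as stated, but it does not by itself supply those intermediate facts.
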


\begin{proof}
Define $M = \bigvee_{i=1}^{\infty} \bigvee_{j=1}^{\infty}P_{i}Q_{ij}$, and let $\epsilon >0$ be arbitrary. The mapping $T_{\epsilon} \colon
\mathbf{M}_{p}(\EE) \to \mathbb{R}$ defined by
$$ T_{\epsilon} \Big( \sum_{i=1}^{\infty}\delta_{x_{i}} \Big) =
\bigvee_{i=1}^{\infty}x_{i} 1_{\{ x_{i} \in [\epsilon, \infty)
\}}$$ is continuous on the set $\Lambda_{\epsilon} = \{ \eta \in
\mathbf{M}_{p}(\EE) : \eta (\{ \epsilon \})=0 \}$ (cf. Resnick~\cite{Resnick87}, page 214). Since $N$ has no fixed atoms (see Lemma 2.1 in
Davis and Hsing~\cite{DH95}), i.e. $\Pr(N \in \Lambda_{\epsilon})=1$, using the
continuous mapping theorem we obtain
\begin{equation}\label{e:convTeps}
M_{n}[\epsilon, \infty)= T_{\epsilon}(N_{n}) \dto T_{\epsilon}(N) = M[\epsilon, \infty) \qquad \textrm{as} \ n \to \infty,
\end{equation}
with the notation
$$ M_{n}B = a_{n}^{-1} \bigvee_{i=1}^{n}X_{i} 1_{\{ a_{n}^{-1}X_{i} \in B \}},$$
and
$$ M B = \bigvee_{i=1}^{\infty} \bigvee_{j=1}^{\infty}P_{i}Q_{ij} 1_{\{ P_{i}Q_{ij} \in B \}}$$
for any Borel set $B$ in $\mathbb{R}$.
Obviously
\begin{equation}\label{e:convas}
M[\epsilon, \infty) \to M(0,\infty) = M
\end{equation}
almost surely as $\epsilon \to 0$. If we show that
\begin{equation}\label{eq:slutskycond}
 \lim_{\epsilon \to 0} \limsup_{n \to \infty} \Pr (|M_{n}[\epsilon,\infty) - M_{n}(0,\infty)| > \delta)=0
 \end{equation}
for any $\delta >0$, then by Theorem 3.5 in Resnick~\cite{Resnick07} we will have $M_{n}(0,\infty) \dto M(0,\infty)$, i.e. $a_{n}^{-1} M_{n} \dto M$ as $n \to \infty$.

Since for arbitrary real numbers $x_{1}, \ldots, x_{n}, y_{1},
\ldots, y_{n}$ the following inequality
 \begin{equation}\label{e:maxineq}
 \Big| \bigvee_{i=1}^{n}x_{i} - \bigvee_{i=1}^{n}y_{i} \Big| \leq
\bigvee_{i=1}^{n}|x_{i}-y_{i}|
 \end{equation}
  holds, note that
\begin{equation*}
  |M_{n}[\epsilon,\infty) - M_{n}(0,\infty)|  \leq M_{n}(0,\epsilon).
\end{equation*}
Take an arbitrary $s > \alpha$. Then using stationarity and Markov's inequality we get the bound
 \begin{eqnarray}\label{eq:slutsky1}
   \nonumber \Pr ( M_{n}(0,\epsilon) > \delta) & \leq & n \Pr \Big( \frac{X_{1}}{a_{n}} 1_{\{X_{1} < \epsilon a_{n}\}} > \delta \Big)
    \leq \frac{n}{\delta^{s} a_{n}^{s}} \E ( X_{1}^{s} 1_{\{ X_{1} < \epsilon a_{n} \}})\\[0.4em]
     & \hspace*{-6em} = &  \hspace*{-3em} \frac{\epsilon^{s}}{\delta^{s}} \cdot n \Pr(X_{1} >  a_{n}) \cdot \frac{\Pr(X_{1} > \epsilon a_{n})}{\Pr(X_{1} > a_{n})} \cdot \frac{\E (X_{1}^{s} 1_{\{ X_{1} < \epsilon a_{n} \}})}{\epsilon^{s} a_{n}^{s} \Pr(X_{1}>\epsilon a_{n})}.
 \end{eqnarray}
Since the distribution of $X_{1}$ is regularly varying with index
$\alpha$, it follows immediately that
$$ \frac{\Pr(X_{1}> \epsilon a_{n})}{\Pr(X_{1}>a_{n})} \to
    \epsilon^{-\alpha}$$
 as $n \to \infty$. By Karamata's theorem
 %(see Theorem 2 in Feller \citet[p. 275]{Feller71}),
 $$ \lim_{n \to \infty} \frac{\E(X_{1}^{s} \, 1_{ \{ X_{1} < \epsilon a_{n} \}
            })}{\epsilon^{s} a_{n}^{s}\Pr(X_{1}> \epsilon a_{n})} =
            \frac{\alpha}{s-\alpha}.$$
  Thus from (\ref{eq:slutsky1}), taking into account
 relation (\ref{eq:an}), we get
 $$ \limsup_{n \to \infty} \Pr(M_{n}(0,\epsilon)>\delta) \leq \delta^{-s}
 \frac{\alpha}{s-\alpha}\epsilon^{s-\alpha}.$$
 Letting $\epsilon \to 0$, since $s-\alpha >0$, we finally obtain
 $$ \lim_{\epsilon \to 0} \limsup_{n \to \infty} \Pr(M_{n}(0,\epsilon)>\delta) = 0,$$
 and relation (\ref{eq:slutskycond}) holds. Therefore $a_{n}^{-1} M_{n} \dto M$ as $n \to \infty$.

 From the representation in (\ref{eq:Nnconv}) and the fact that $\sup_{j}Q_{ij}=1$ we obtain the distribution of the limit $M$,
 $$ \Pr(M \leq x) = \Pr \Big( \bigvee_{i=1}^{\infty}P_{i} \leq x \Big) = \Pr \Big( \sum_{i}\delta_{P_{i}}(x,\infty) = 0 \Big) = e^{-\nu(x,\infty)} = e^{-\theta x^{-\alpha}}$$
 for $x>0$.
 %$\hfill\blacksquare$\\[0.2em]
\end{proof}

\section{Functional convergence of partial maxima processes $M_{n}(\,\cdot\,)$}
\label{S:FLT}

In this section we show the convergence of the partial maxima
processes $M_n(\,\cdot\,)$ to an extremal process in the space $D[0, 1]$
equipped with the Skorohod $M_1$ topology. Similar to the case of
partial sum processes in Basrak et al.~\cite{BKS} we first represent
the partial maxima process $M_n(\,\cdot\,)$ as the image of the
time-space point process $N_n^{*}$ under a certain maximum
functional. Then, using certain continuity properties of this
functional, the continuous mapping theorem and the standard
"finite dimensional convergence plus tightness" procedure we
transfer the weak convergence of $N_n^{*}$ in (\ref{e:Nn*conv}) to
weak convergence of $M_n(\,\cdot\,)$.

Recall the definition of the $M_{1}$ topology. For $x \in D[0,1]$
the \emph{completed graph} of $x$ is the set
\[
  \Gamma_{x}
  = \{ (t,z) \in [0,1] \times \mathbb{R} : z= \lambda x(t-) + (1-\lambda)x(t) \ \text{for some}\ \lambda \in [0,1] \},
\]
where $x(t-)$ is the left limit of $x$ at $t$. Besides the points
of the graph $ \{ (t,x(t)) : t \in [0,1] \}$, the completed graph
of $x$ also contains the vertical line segments joining $(t,x(t))$
and $(t,x(t-))$ for all discontinuity points $t$ of $x$. We define
an \emph{order} on the graph $\Gamma_{x}$ by saying that
$(t_{1},z_{1}) \leq (t_{2},z_{2})$ if either (i) $t_{1} < t_{2}$ or
(ii) $t_{1} = t_{2}$ and $|x(t_{1}-) - z_{1}| \leq |x(t_{2}-) -
z_{2}|$. A \emph{parametric representation} of the completed graph
$\Gamma_{x}$ is a continuous nondecreasing function $(r,u)$
mapping $[0,1]$ onto $\Gamma_{x}$, with $r$ being the time
component and $u$ being the spatial component.  Let $\Pi(x)$
denote the set of parametric representations of the graph
$\Gamma_{x}$. For $x_{1},x_{2} \in D[0,1]$ define
\[
  d_{M_{1}}(x_{1},x_{2})
  = \inf \{ \|r_{1}-r_{2}\|_{[0,1]} \vee \|u_{1}-u_{2}\|_{[0,1]} : (r_{i},u_{i}) \in \Pi(x_{i}), i=1,2 \},
\]
where $\|x\|_{[0,1]} = \sup \{ |x(t)| : t \in [0,1] \}$ and $a
\vee b = \max\{a,b\}$. $d_{M_{1}}$ is a metric on $D[0,1]$, and
the induced topology is called the Skorohod $M_{1}$ topology. This
topology is weaker than the more frequently used Skorohod $J_{1}$
topology. For more discussion of the $M_{1}$ topology we refer to Whitt~\cite{Whitt02}, sections 12.3-12.5.

Fix $0 < v < u < \infty$. Define the maximum functional
$$
  \phi^{(u)} \colon \mathbf{M}_{p}([0,1] \times \EE_{v}) \to
  D[0,1]
$$
 by
 $$ \phi^{(u)} \Big( \sum_{i}\delta_{(t_{i},\,x_{i})} \Big) (t)
  =  \bigvee_{t_{i} \leq t} x_{i} \,1_{\{u < x_i < \infty\}}, \qquad t \in [0,
  1],$$
  where the supremum of an empty set may be taken, for
  convenience, to be $\min\{0, \bigwedge_{i}x_{i}\}$.
Note that $\phi^{(u)}$ is well defined because $[0,1] \times
\EE_{u}$ is a relatively compact subset of $[0,1] \times \EE_{v}$.
The space $\mathbf{M}_p([0,1] \times \EE_{v})$ of Radon point
measures on $[0,1] \times \EE_{v}$ is equipped with the vague
topology and $D[0, 1]$ is equipped with the $M_1$ topology. Let
$$\Lambda = \{ \eta \in \mathbf{M}_{p}([0,1] \times \EE_{v}) : \eta
(\{0,1\} \times \EE_{u}) = \eta ([0,1] \times \{u, \infty \})=0\}.$$
Observe that elements of $\Lambda$ are Radon point
measures that have no atoms on the border of $[0,1] \times
\EE_{u}$. Then the point process $N^{(v)}$ defined in
(\ref{e:Nn*conv}) almost surely belongs to the set $\Lambda$, see
Lemma 3.1 in Basrak et al.~\cite{BKS}. Now we will show that
$\phi^{(u)}$ is continuous on the set $\Lambda$.

\begin{lem}\label{l:contfunct}
The maximum functional $\phi^{(u)} \colon \mathbf{M}_{p}([0,1]
\times \EE_{v}) \to D[0,1]$ is continuous on the set $\Lambda$,
when $D[0,1]$ is endowed with the Skorohod $M_{1}$ topology.
\end{lem}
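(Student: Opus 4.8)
The plan is to prove sequential continuity of $\phi^{(u)}$ on $\Lambda$: take $\eta \in \Lambda$ and a sequence $\eta_n \to \eta$ vaguely in $\mathbf{M}_p([0,1]\times\EE_v)$, and show $\phi^{(u)}(\eta_n) \to \phi^{(u)}(\eta)$ in $(D[0,1], d_{M_1})$. Since $[0,1]\times\EE_u$ is relatively compact in $[0,1]\times\EE_v$ and $\eta$ has no mass on its topological boundary $\{0,1\}\times\EE_u \,\cup\, [0,1]\times\{u,\infty\}$, vague convergence localizes to this set: for all large $n$, $\eta_n$ and $\eta$ have the same finite number $k$ of atoms in $[0,1]\times\EE_u$, say at $(t_i^{(n)}, x_i^{(n)}) \to (t_i, x_i)$ with $t_i \in (0,1)$, $x_i \in (u,\infty)$, and $x_i$ pairwise distinct (we may assume the $t_i$ distinct as well, reordering). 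Any mass of $\eta_n$ outside $[0,1]\times\EE_u$ is irrelevant because the indicator $1_{\{u < x < \infty\}}$ kills it; so $\phi^{(u)}(\eta_n)(t) = \bigvee_{t_i^{(n)} \le t} x_i^{(n)}$ and $\phi^{(u)}(\eta)(t) = \bigvee_{t_i \le t} x_i$ are step functions (jumps of the extremal type) built from finitely many converging atoms.

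The core step is then a deterministic lemma: if $x^{(n)}, x \in D[0,1]$ are the running maxima of finitely many atoms $(t_i^{(n)}, x_i^{(n)}) \to (t_i, x_i)$ (with $t_i$ distinct in $(0,1)$ and $x_i$ distinct), then $d_{M_1}(x^{(n)}, x) \to 0$. I would prove this by explicitly constructing parametric representations. Order the atoms so $t_1 < \dots < t_k$; for large $n$ the same order holds for $(t_i^{(n)})$. The completed graph $\Gamma_x$ consists of horizontal pieces at heights $m_j := \bigvee_{i \le j} x_i$ on $[t_j, t_{j+1})$ together with vertical segments at each $t_j$ from $m_{j-1}$ up to $m_j$ (and an initial piece at height $\min\{0,\bigwedge x_i\}$ on $[0,t_1)$, with a vertical jump up to $m_1$ at $t_1$). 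Because the heights $m_j$ depend continuously on the $x_i$ (max of finitely many) and the jump times converge, I would build parametric representations $(r_n, u_n)$ and $(r,u)$ that traverse these graphs "in lockstep": allocate a fixed sub-interval of the parameter domain $[0,1]$ to each horizontal stretch and to each vertical jump, then within the horizontal-stretch sub-intervals let $r_n$ interpolate linearly between consecutive $t_i^{(n)}$ while $u_n$ stays at the constant height, and within the vertical-jump sub-intervals let $r_n \equiv t_j^{(n)}$ while $u_n$ interpolates linearly from $m_{j-1}^{(n)}$ to $m_j^{(n)}$. Then $\|r_n - r\|_{[0,1]} \le \max_i |t_i^{(n)} - t_i| \to 0$ and $\|u_n - u\|_{[0,1]} \le \max_j |m_j^{(n)} - m_j| \to 0$, giving $d_{M_1}(x^{(n)},x) \to 0$.

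The main obstacle is handling the degenerate configurations that vague convergence does not exclude, chiefly the case where two limiting atoms share the same time coordinate $t_i = t_{i'}$ (so the limit function $x$ has a single jump whose size is the max of two contributions), or where the approximating atoms $t_i^{(n)}, t_{i'}^{(n)}$ are on opposite sides of the common limit and oscillate. In $D[0,1]$ with $J_1$ this would fail, but in $M_1$ it is fine precisely because the completed graph collapses the two nearby small jumps of $x^{(n)}$ into the vertical segment of $x$; I would treat this by merging the corresponding sub-intervals in the parametric representation and checking that the spatial coordinate, being a running maximum, still lands within $o(1)$ of the limiting vertical segment (using monotonicity of the running max and $|m_j^{(n)} - m_j| \to 0$). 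A secondary, more routine point is the left endpoint: I must check that putting the empty-supremum value at $\min\{0,\bigwedge_i x_i\}$ makes $\phi^{(u)}(\eta)$ genuinely càdlàg and that this base height also converges, which is immediate since all $x_i > u > 0$ forces the base to be $0$ here. Once the deterministic lemma is in place, the continuity statement follows, and invoking it with $\eta = N^{(v)}$ (which lies in $\Lambda$ a.s. by Lemma 3.1 of Basrak et al.) together with the continuous mapping theorem will later transfer (\ref{e:Nn*conv}) to the functional convergence of $M_n(\,\cdot\,)$.
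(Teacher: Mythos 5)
Your proposal is correct, and its first step (localizing the vague convergence to $[0,1]\times\EE_{u}$, using $\eta(\{0,1\}\times\EE_{u})=\eta([0,1]\times\{u,\infty\})=0$ to get, for large $n$, exactly $k$ atoms of $\eta_{n}$ converging to the $k$ atoms of $\eta$; this is Lemma 7.1 in Resnick~\cite{Resnick07}) coincides with the paper's. Where you diverge is the key step: the paper does not construct parametric representations at all. It shows pointwise convergence $\phi^{(u)}(\eta_{n})(t)\to\phi^{(u)}(\eta)(t)$ at every $t$ outside the finite set of jump times (using the elementary bound $|\bigvee x_{i}^{(n)}-\bigvee x_{i}|\le\bigvee|x_{i}^{(n)}-x_{i}|$), notes that all the functions involved are monotone, and then invokes Corollary 12.5.1 in Whitt~\cite{Whitt02}, by which $M_{1}$ convergence of monotone functions is equivalent to pointwise convergence on a dense set plus convergence at the endpoints (the endpoints are unproblematic since the atoms have time coordinates in $(0,1)$). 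Your route instead proves the needed convergence by hand, building lockstep parametrizations of the completed graphs and merging parameter sub-intervals where several limiting atoms share a time coordinate; this is essentially a self-contained proof of the special case of Whitt's monotone criterion that is needed, so it buys independence from that citation at the cost of the bookkeeping you describe, while the paper's argument is shorter precisely because the monotone criterion absorbs the collapsing-jumps difficulty. Two small cautions on your write-up: the parenthetical ``we may assume the $t_{i}$ distinct as well, reordering'' is not legitimate (coinciding time coordinates are exactly the configuration that separates $M_{1}$ from $J_{1}$, as the paper emphasizes), though you correctly retract it and treat that case in your third paragraph, which is where the real work of your construction lies and would need to be carried out carefully (e.g.\ by choosing, for each $n$, the parametrization of the limiting vertical segment adapted to the approximating staircase); and the assumption that the $x_{i}$ are pairwise distinct is neither guaranteed for $\eta\in\Lambda$ nor needed, so it should simply be dropped.
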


\begin{proof}
Take an arbitrary $\eta \in \Lambda$ and suppose that $\eta_{n}
\vto \eta$ in $\mathbf{M}_{p}([0,1] \times \EE_{v})$. We will show
that $\phi^{(u)}(\eta_{n}) \to \phi^{(u)}(\eta)$ in $D[0,1]$
according to the $M_{1}$ topology.
Since the set $[0,1] \times \EE_{u}$ is relatively compact in
$[0,1] \times \EE_{v}$, there exists a nonnegative integer
$k=k(\eta)$ such that
$$ \eta ([0,1] \times \EE_{u}) = k < \infty.$$
By assumption, $\eta$ does not have any atoms on the border of the
set $[0,1] \times \EE_{u}$. Hence, by Lemma 7.1 in Resnick~\cite{Resnick07}, there exists a positive integer $n_{0}$ such
that for all $n \geq n_{0}$ it holds that
$$ \eta_{n} ([0,1] \times \EE_{u})=k.$$
If $k=0$ there is nothing to prove, so assume $k \geq 1$, and let
$(t_{i},x_{i})$ for $i=1,\ldots,k$ be the atoms of $\eta$ in
$[0,1] \times \EE_{u}$. By the same lemma, the $k$ atoms
$(t_{i}^{(n)}, x_{i}^{(n)})$ of $\eta_{n}$ in $[0,1] \times
\EE_{u}$ (for $n \geq n_{0}$) can be labelled in such a way that
for every $i \in \{1,\ldots,k\}$ we have
$$ (t_{i}^{(n)}, x_{i}^{(n)}) \to (t_{i},x_{i}) \qquad \textrm{as}
\ n \to \infty.$$ In particular, for any $\delta >0$ we can find a
positive integer $n_{\delta} \geq n_{0}$ such that for all $n \geq
n_{\delta}$,
\begin{equation}\label{e:etaconv}
 |t_{i}^{(n)} - t_{i}| < \delta \quad \textrm{and} \quad
 |x_{i}^{(n)}- x_{i}| < \delta \qquad \textrm{for} \ i=1,\ldots,k.
\end{equation}
Let the sequence
$$ 0 < \tau_{1} < \tau_{2} < \ldots < \tau_{p} < 1$$
be such that the sets $\{\tau_{1}, \ldots, \tau_{p}\}$ and
$\{t_{1}, \ldots, t_{k}\}$ coincide. Since $\eta$ can have several
atoms with the same time coordinate, it always  holds that $p \leq k$.
Put $\tau_{0}=0$, $\tau_{p+1}=1$, and take
$$ 0 < r < \frac{1}{2} \min_{0 \leq i \leq p}|\tau_{i+1} -
\tau_{i}|.$$
 For any $t \in [0,1] \setminus \{\tau_{1}, \ldots, \tau_{p}\}$ we
 can find $\delta \in (0,u)$ such that
 $$ \delta < r \quad \textrm{and} \quad \delta < \min_{1 \leq i \leq
 p}|t - \tau_{i}|.$$
 Then relation (\ref{e:etaconv}), for $n \geq n_{\delta}$, implies
 that $t_{i}^{(n)} \leq t$ is equivalent to $t_{i} \leq t$, and we
 obtain
 $$ |\phi^{(u)}(\eta_{n})(t) - \phi^{(u)}(\eta)(t)| = \bigg| \bigvee_{t_{i}^{(n)} \leq
 t}x_{i}^{(n)} - \bigvee_{t_{i}\leq t}x_{i} \bigg| \leq \bigvee_{t_{i} \leq
 t}|x_{i}^{(n)}-x_{i}| < \delta.$$
 Therefore
 $$\lim_{n \to \infty}|\phi^{(u)}(\eta_{n})(t) -
 \phi^{(u)}(\eta)(t)|< \delta,$$
 and if we let $\delta \to 0$, it follows that
 $\phi^{(u)}(\eta_{n})(t) \to \phi^{(u)}(\eta)(t)$ as $n \to
 \infty$.
Note that the functions $\phi^{(u)}(\eta)$ and
$\phi^{(u)}(\eta_{n})\,(n \geq n_{\delta})$ are monotone. Since,
by Corollary 12.5.1 in Whitt~\cite{Whitt02}, $M_{1}$ convergence
for monotone functions is equivalent to pointwise convergence in a
dense subset of points plus convergence at the endpoints, we
obtain that $d_{M_{1}}(\phi^{(u)}(\eta_{n}), \phi^{(u)}(\eta)) \to
0$ as $n \to \infty$, i.e. $\phi^{(u)}$ is continuous at $\eta$.
%$\hfill\blacksquare$
\end{proof}

The theorem below gives conditions under which the partial maxima
processes of a strictly stationary, jointly regularly varying
sequence of nonnegative random variables satisfies a functional
limit theorem with an extremal process as a limit. Extremal processes can be defined by Poisson processes in the following way. Let $\xi = \sum_{k}\delta_{(t_{k}, j_{k})}$ be a Poisson process on $(0,\infty) \times \mathbb{E}$ with mean measure $\lambda \times \nu$, where $\lambda$ is the Lebesgue measure. The extremal process $\widetilde{M}(\,\cdot\,)$ generated by $\xi$ is defined by
$$ \widetilde{M}(t) = \sup \{ j_{k} : t_{k} \leq t\}, \qquad t>0.$$
The distribution function of $\widetilde{M}(\,\cdot\,)$ is of the form
$$ \Pr( \widetilde{M}(t) \leq x) = e^{-t \nu(x,\infty)}$$
for $t>0$ (cf. Resnick~\cite{Resnick86}). The measure $\nu$ is called the exponent measure.

The convergence in the theorem takes place in the space $D[0,1]$ endowed with the Skorohod $M_{1}$ topology. In the proof of the theorem we will need a characterization of $M_{1}$ convergence for random processes which is due to Skorohod. Put
$$ M(x_{1},x_{2},x_{3}) = \left\{ \begin{array}{ll}
                                   0, & \ \ \textrm{if} \ x_{2} \in [x_{1}, x_{3}], \\
                                   \min\{ |x_{2}-x_{1}|, |x_{3}-x_{2}| \}, & \ \ \textrm{otherwise},
                                 \end{array}\right.$$
(note that $M(x_{1},x_{2},x_{3})$ is the distance from $x_{2}$ to $[x_{1},x_{3}]$) and introduce the $M_{1}$ oscillation $\omega_{\delta}(x)$ of a function $x \in D[0,1]$ by
$$ \omega_{\delta}(x) = \sup_{{\footnotesize \begin{array}{c}
                                t_{1} \leq t \leq t_{2} \\
                                0 \leq t_{2}-t_{1} \leq \delta
                              \end{array}}
} M(x(t_{1}), x(t), x(t_{2})),$$
for $\delta >0$. Then the following corollary of Theorems 3.2.1 and 3.2.2 in Skorohod~\cite{Sk56} holds.

\begin{prop}\label{pr:Skorohod}
Let $Z_{n}(\,\cdot\,)$ be processes in $D[0,1]$ whose finite dimensional distributions converge to those of a process $Z(\,\cdot\,)$ which is a.s. continuous at $t=0$ and $t=1$. Then $Z_{n}(\,\cdot\,)$ converges in distribution to $Z(\,\cdot\,)$ in $D[0,1]$ with respect to the Skorohod $M_{1}$ topology if and only if for every $\epsilon >0$,
\begin{equation}\label{e:oscM1}
 \lim_{\delta \to 0} \limsup_{n \to \infty} \Pr ( \omega_{\delta}(Z_{n}(\,\cdot\,)) > \epsilon ) =0.
 \end{equation}
\end{prop}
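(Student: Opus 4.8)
The plan is to deduce the proposition from Skorohod's weak-convergence criterion for the $M_{1}$ topology (Theorems 3.2.1 and 3.2.2 in~\cite{Sk56}), which, in the form that suits us here, states that for processes whose finite dimensional distributions converge along a dense set of times containing $0$ and $1$, the convergence $Z_{n}(\,\cdot\,) \dto Z(\,\cdot\,)$ in the $M_{1}$ topology is equivalent to an $M_{1}$-tightness condition assembled from a uniform bound on $\sup_{t}|Z_{n}(t)|$, the interior oscillation $\omega_{\delta}$ of \eqref{e:oscM1}, and two boundary moduli controlling $\sup_{0 \leq t \leq \delta}|Z_{n}(t) - Z_{n}(0)|$ and $\sup_{1-\delta \leq t \leq 1}|Z_{n}(t) - Z_{n}(1)|$. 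Since finite dimensional convergence is part of the hypothesis, everything reduces to showing that, under the stated assumptions, \eqref{e:oscM1} alone already forces the boundedness term and the two boundary terms. The necessity of \eqref{e:oscM1} is then immediate: if $Z_{n}(\,\cdot\,) \dto Z(\,\cdot\,)$ in $M_{1}$, the laws of the $Z_{n}$ together with that of $Z$ form a tight family in the Polish space $(D[0,1], d_{M_{1}})$, so the necessity half of Skorohod's compactness criterion delivers the full tightness condition, of which \eqref{e:oscM1} is one ingredient.

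Turning to sufficiency, I would assume \eqref{e:oscM1} and first dispose of the boundary modulus at $0$. Fix $\epsilon > 0$ and pick a small $t_{2} > 0$ outside the at most countable set of fixed discontinuities of $Z$. By finite dimensional convergence $Z_{n}(t_{2}) - Z_{n}(0) \dto Z(t_{2}) - Z(0)$, and by a.s.\ continuity of $Z$ at $0$ the right-hand side $Z(t_{2}) - Z(0)$ tends to $0$ in probability as $t_{2} \downarrow 0$; hence $\limsup_{n}\Pr(|Z_{n}(t_{2}) - Z_{n}(0)| > \epsilon)$ is as small as we please once $t_{2}$ is small. Now take $t_{1} = 0$ in the definition of $\omega_{t_{2}}$: the number $M(Z_{n}(0), Z_{n}(t), Z_{n}(t_{2}))$ is the distance from $Z_{n}(t)$ to the interval with endpoints $Z_{n}(0)$ and $Z_{n}(t_{2})$, a set of length $|Z_{n}(t_{2}) - Z_{n}(0)|$ containing $Z_{n}(0)$, so on the event $\{\omega_{t_{2}}(Z_{n}) \leq \epsilon\} \cap \{|Z_{n}(t_{2}) - Z_{n}(0)| \leq \epsilon\}$ one has $\sup_{0 \leq t \leq t_{2}}|Z_{n}(t) - Z_{n}(0)| \leq 2\epsilon$. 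Combined with \eqref{e:oscM1}, this kills the boundary modulus at $0$ in the $\lim_{\delta\to0}\limsup_{n}$ sense; the modulus at $1$ is handled symmetrically, using a.s.\ continuity of $Z$ at $1$. The uniform bound on $\sup_{t}|Z_{n}(t)|$ follows in the same spirit, by comparing $Z_{n}(t)$ with the values $Z_{n}(s_{j})$ over a sufficiently fine partition $0 = s_{0} < \cdots < s_{m} = 1$ (chosen away from the fixed discontinuities) and invoking \eqref{e:oscM1} together with finite dimensional convergence. Skorohod's tightness condition being verified, Theorem 3.2.2 of~\cite{Sk56} applies; the assumed finite dimensional limits also identify the limiting process as $Z$, the coordinate projections being $M_{1}$-continuous off the fixed discontinuities of $Z$, and one concludes $Z_{n}(\,\cdot\,) \dto Z(\,\cdot\,)$ in the $M_{1}$ topology.

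The step I expect to be the main obstacle is the sufficiency direction, and within it the reduction of the endpoint moduli to the interior oscillation: one must choose the auxiliary times $t_{2}$ and $1 - t_{2}$ carefully, exploit that $t_{1} = 0$ (respectively the right endpoint $1$) is a legitimate choice in the definition of $\omega_{\delta}$, and then pass from control at finitely many times near an endpoint to control of the supremum there, with the a.s.\ continuity of $Z$ at $0$ and $1$ supplying precisely the missing tail estimate. Matching the moduli as written here with those appearing in~\cite{Sk56} and~\cite{Whitt02}, and verifying the $M_{1}$-continuity of the evaluation functionals off the fixed discontinuities of $Z$, is then routine bookkeeping.
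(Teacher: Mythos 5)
Your proposal is correct and takes essentially the same route as the paper, which gives no proof at all beyond asserting the proposition as a corollary of Theorems 3.2.1 and 3.2.2 in Skorohod~\cite{Sk56}. The extra work you supply—deriving the endpoint moduli and the stochastic boundedness from the oscillation condition (\ref{e:oscM1}) together with finite dimensional convergence and the a.s.\ continuity of $Z$ at $t=0$ and $t=1$—is sound and simply makes explicit the bookkeeping the paper delegates to the citation.
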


\begin{rem}\label{r:J1conv}
The statement of Proposition~\ref{pr:Skorohod} remains valid if the $M_{1}$ topology is replaced by the $J_{1}$ topology, and the $M_{1}$ oscillation $\omega_{\delta}(\,\cdot\,)$ is replaced by the $J_{1}$ oscillation $\omega_{\delta}'(\,\cdot\,)$ defined by
$$ \omega_{\delta}'(x) = \sup_{{\footnotesize \begin{array}{c}
                                t_{1} \leq t \leq t_{2} \\
                                0 \leq t_{2}-t_{1} \leq \delta
                              \end{array}}
} \min\{| x(t)-x(t_{1})|, |x(t_{2})-x(t)| \},$$
for $x \in D[0,1]$ and $\delta >0$ (see Skorohod~\cite{Sk56}).
\end{rem}

\begin{thm}\label{t:functconvergence}
Let $(X_{n})$ be a strictly stationary sequence of nonnegative
random variables, jointly regularly varying with index $\alpha \in
(0, \infty)$. Suppose that Conditions \ref{c:anticluster} and
\ref{c:mixcond} hold. Then the partial maxima stochastic process
$$  M_{n}(t) = \bigvee_{i=1}^{\lfloor nt
   \rfloor}\frac{X_{i}}{a_{n}}, \qquad t \in [0,1],$$
satisfies
$$ M_{n}(\,\cdot\,) \dto \widetilde{M}(\,\cdot\,), \qquad n \to \infty,$$
in $D[0,1]$ endowed with the $M_{1}$ topology, where
$\widetilde{M}(\,\cdot\,)$ is an extremal process with exponent
measure $\nu(x,\infty) = \theta x^{-\alpha}$, $x>0$, with $\theta$ given by $(\ref{e:theta})$.
\end{thm}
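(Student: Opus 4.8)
The plan is to follow the "finite-dimensional convergence plus tightness" scheme outlined before the statement, transferring the point-process convergence \eqref{e:Nn*conv} through the maximum functionals $\phi^{(u)}$ of Lemma~\ref{l:contfunct}. First I would fix $0<v<u<\infty$ and define $M_n^{(u)}(\,\cdot\,) = \phi^{(u)}(N_n^{*})$. Because $\phi^{(u)}$ is continuous on $\Lambda$, and the limit $N^{(v)}$ lies in $\Lambda$ almost surely (Lemma 3.1 in Basrak et al.~\cite{BKS}), the continuous mapping theorem applied to \eqref{e:Nn*conv} gives $M_n^{(u)}(\,\cdot\,) \dto \phi^{(u)}(N^{(v)})(\,\cdot\,)$ in $D[0,1]$ with the $M_1$ topology, as $n\to\infty$. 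The process $\phi^{(u)}(N^{(v)})(t) = \bigvee_{T_i^{(v)}\le t} uZ_{ij}\,1_{\{uZ_{ij}>u\}}$ is, using $\sup_j Z_{ij}=1$ (so only the "$=1$" points survive the truncation at level $u$) together with the thinning property of the Poisson process $\sum_i\delta_{T_i^{(v)}}$, exactly an extremal process truncated below $u$; more precisely it equals $\widetilde M(\,\cdot\,)\vee 0$ restricted to values exceeding $u$, where $\widetilde M$ is the extremal process with exponent measure $\nu(x,\infty)=\theta x^{-\alpha}$. I would verify this identification by computing finite-dimensional distributions: for $0\le t_1<\dots<t_m\le 1$ and levels $x_1,\dots,x_m>u$, the probability that $\phi^{(u)}(N^{(v)})(t_\ell)\le x_\ell$ for all $\ell$ matches $\Pr(\widetilde M(t_\ell)\le x_\ell,\ \ell=1,\dots,m)$, which follows from the Poisson structure and the homogeneity of the intensity $\theta v^{-\alpha}$ scaled appropriately.

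Next I would remove the truncation. Let $\widetilde M^{(u)}(\,\cdot\,)$ denote $\widetilde M(\,\cdot\,)$ with values $\le u$ replaced by $0$ (equivalently, generated by the points $j_k>u$ of $\xi$). Then $\widetilde M^{(u)}(\,\cdot\,)\to\widetilde M(\,\cdot\,)$ almost surely in $D[0,1]$ as $u\to 0$, since for each fixed path only finitely many jumps exceed any positive level and eventually all jumps on $[0,1]$ are captured; this convergence even holds in the uniform metric on each path, hence in $M_1$. By a standard approximation argument (Theorem 3.5 in Resnick~\cite{Resnick07}, the same tool used in the proof of Theorem~\ref{t:notimeconv}), it then suffices to show
\begin{equation}\label{e:negligible}
 \lim_{u\to 0}\limsup_{n\to\infty}\Pr\bigl(d_{M_1}(M_n^{(u)}(\,\cdot\,),\,M_n(\,\cdot\,))>\delta\bigr)=0
\end{equation}
for every $\delta>0$. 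Since $M_n(\,\cdot\,)=\phi^{(0)}(N_n^{*})$ is obtained from $M_n^{(u)}(\,\cdot\,)=\phi^{(u)}(N_n^{*})$ by also including the points of $N_n^{*}$ with spatial coordinate in $(v,u]$ together with those in $(0,v]$, and both are nondecreasing, one has $0\le M_n(t)-M_n^{(u)}(t)$ for all $t$ once we note $M_n^{(u)}\le M_n$; the uniform distance is controlled by $\bigvee_{i=1}^{n}(X_i/a_n)\,1_{\{X_i<ua_n\}} = M_n(0,u)$ in the notation of Theorem~\ref{t:notimeconv}. Hence \eqref{e:negligible} reduces to $\lim_{u\to 0}\limsup_{n\to\infty}\Pr(M_n(0,u)>\delta)=0$, which is precisely the Karamata/Markov estimate \eqref{eq:slutsky1}–\eqref{eq:slutskycond} already established (with $\epsilon$ there renamed $u$). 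This gives $M_n(\,\cdot\,)\dto\widetilde M(\,\cdot\,)$ in the $M_1$ topology.

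Alternatively, and perhaps more cleanly for the record, one can run the "fidi + tightness" route of Proposition~\ref{pr:Skorohod} directly: the finite-dimensional distributions of $M_n(\,\cdot\,)$ converge to those of $\widetilde M(\,\cdot\,)$ by applying Theorem~\ref{t:notimeconv} on subintervals (the increments being governed by the same point-process limit), and $\widetilde M$ is a.s.\ continuous at $t=0$ and $t=1$ since the extremal process has no fixed discontinuities; then the $M_1$-oscillation condition \eqref{e:oscM1} must be checked. For a nondecreasing function $x$, however, $M(x(t_1),x(t),x(t_2))=0$ whenever $t_1\le t\le t_2$, so $\omega_\delta(M_n(\,\cdot\,))\equiv 0$ for every $\delta$ — the $M_1$ oscillation of a monotone càdlàg function vanishes identically. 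Thus \eqref{e:oscM1} is trivially satisfied and tightness in $M_1$ is automatic; this is exactly the feature that makes $M_1$ (and not $J_1$) the natural topology here. I expect the main obstacle to be neither of these steps individually but the bookkeeping in identifying $\phi^{(u)}(N^{(v)})$ with the truncated extremal process — one must carefully use $\sup_j Z_{ij}=1$ to see that the within-cluster points below level $1$ are irrelevant after truncation at $u<1\cdot(\text{scale})$, and correctly match the Poisson intensity $\theta v^{-\alpha}$ on $[0,1]$ with the exponent measure $\nu(u,\infty)=\theta u^{-\alpha}$ of the limit; everything else is either the continuous mapping theorem, the already-proved Karamata bound, or the trivial vanishing of the $M_1$ oscillation for monotone paths.
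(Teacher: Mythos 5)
Your overall skeleton is the same as the paper's (truncate at level $u$, apply Lemma~\ref{l:contfunct} and the continuous mapping theorem to \eqref{e:Nn*conv}, remove the truncation via Theorem 3.5 of Resnick, and use monotonicity to kill the $M_1$ oscillation), but the step in which you identify the law of $\phi^{(u)}(N^{(v)})$ has a genuine gap. The points $Z_{ij}$ appearing in \eqref{e:Nn*conv} are the points of the tail process conditioned on $\sup_{i \leq -1} Y_i \leq 1$; they do \emph{not} satisfy $\sup_j Z_{ij} = 1$ --- that normalization belongs to the $Q_{ij}$ of the timeless cluster representation \eqref{eq:Nnconv}, a different object. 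Consequently ``only the $=1$ points survive the truncation'' is false: the within-cluster maximum $\bigvee_j Z_{ij} \geq Y_0 > 1$ is random, and the limit of $M_n^{(u)}$ is an extremal process whose exponent measure is
$\nu^{(u)}(x,\infty) = u^{-\alpha}\,\Pr\bigl(u \bigvee_{i \geq 0} Y_i 1_{\{Y_i>1\}} > x,\ \sup_{i \leq -1} Y_i \leq 1\bigr)$.
Your assertion that this is exactly $\widetilde M$ truncated at $u$, i.e.\ that $\nu^{(u)}(x,\infty) = \theta x^{-\alpha}$ for $x>u$ with $\theta$ given by \eqref{e:theta}, is the crux of the theorem and cannot be waved through by ``the Poisson structure and homogeneity of the intensity $\theta v^{-\alpha}$'': nothing in your argument connects the limit to the extremal index $\theta$. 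The paper closes precisely this gap without computing $\nu^{(u)}$ directly: it applies the $M_1$-continuous projection $x \mapsto x(1)$ to get $M_n^{(u)}(1) \dto M^{(u)}(1)$, observes that $M_n^{(u)}(1) = M_n(u,\infty)$ in the notation of Theorem~\ref{t:notimeconv}, concludes $M^{(u)}(1) \eqd M(u,\infty)$ and hence $\nu^{(u)}(x,\infty) \to \theta x^{-\alpha}$ as $u \to 0$, which is exactly what drives the finite-dimensional convergence of $M^{(u)}$ to $\widetilde M$. You need either this comparison with Theorem~\ref{t:notimeconv} or an explicit tail-process computation; as written, the identification step fails.

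Two smaller points. First, the points of $N^{(v)}$ have spatial coordinates $vZ_{ij}$, so $\phi^{(u)}(N^{(v)})(t) = \bigvee_{T_i^{(v)} \leq t}\bigvee_j vZ_{ij}1_{\{vZ_{ij}>u\}}$; your formula mixes the scales $u$ and $v$, and the clean way to handle this (as in the paper) is to use $N^{(u)} \eqd N^{(v)}|_{[0,1]\times\EE_u}$. Second, your alternative route --- finite-dimensional convergence of $M_n(\,\cdot\,)$ itself ``by applying Theorem~\ref{t:notimeconv} on subintervals'' --- is not a substitute: the one-dimensional result does not give joint convergence over several time points; that joint structure is exactly what the time-space point process convergence \eqref{e:Nn*conv} provides, so you would be back to the truncation argument anyway. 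The remaining ingredients of your proposal (uniform bound $\sup_t|M_n(t)-M_n^{(u)}(t)| \leq u$, vanishing $M_1$ oscillation of monotone paths, a.s.\ continuity of $\widetilde M$ at $0$ and $1$) are correct and match the paper.
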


\begin{proof}
Using the techniques from the proof of Theorem 3.4 in Basrak et al.~\cite{BKS} we obtain that the point process
$$ \widehat{N}^{(u)} = \sum_{i} \delta_{(T_{i}^{(u)}, u \bigvee_{j} Z_{ij}1_{\{ Z_{ij} >1
\}})},$$ is a Poisson process with mean measure $\lambda \times \nu^{(u)}$, where
the measure $\nu^{(u)}$ is defined by
$$  \nu^{(u)}(x, \infty) =  u^{-\alpha} \,
\Pr \biggl( u \bigvee_{i \geq 0} Y_i \, 1_{\{Y_i > 1\}} > x, \,
\sup_{i \leq -1} Y_i \leq 1 \biggr), \qquad x >0,$$
with $(Y_{i})$ being the tail process of the sequence $(X_{i})$.

Consider now $0<u<v$ and
$$  \phi^{(u)} (N_{n}^{*}\,|\,_{[0,1] \times \EE_{u}}) (\,\cdot\,)
  = \phi^{(u)} (N_{n}^{*}\,|\,_{[0,1] \times \EE_{v}}) (\,\cdot\,)
  = \bigvee_{i/n \leq \, \cdot} \frac{X_{i}}{a_{n}} 1_{ \big\{ \frac{X_{i}}{a_{n}} > u
    \big\} },$$
which by Lemma~\ref{l:contfunct} and the continuous mapping theorem converges in distribution in $D[0,1]$ under the $M_{1}$ metric to
$$
\phi^{(u)}
(N^{(v)})(\,\cdot\,)
 =\phi^{(u)} (N^{(v)}\,|\,_{[0,1] \times \EE_{u}})(\,\cdot\,).
$$
Since by the definition of the process $N^{(u)}$ in
(\ref{e:Nn*conv}) it holds that
$ N^{(u)} \eind
N^{(v)} |_{[0, 1] \times \EE_u}$,
the last expression above is equal in distribution to
$$
\phi^{(u)} (N^{(u)})(\,\cdot\,)
 = \bigvee_{T_{i}^{(u)} \leq \, \cdot}
   \bigvee_{j}uZ_{ij}1_{ \{ Z_{ij} > 1 \} }.
$$
 But since
  $\phi^{(u)}(N^{(u)}) = \phi^{(u)} (\widehat{N}^{(u)})\,\eind\,\phi^{(u)} (\widetilde{N}^{(u)})$,
 where
 $$ \widetilde{N}^{(u)} = \sum_{i} \delta_{(T_{i},\,K_{i}^{(u)})}
 $$
 is a Poisson process (or Poisson random measure) with mean measure $\lambda \times \nu^{(u)}$,
 we obtain
 \begin{equation}\label{eq:convaboveu}
  M_{n}^{(u)}(\,\cdot\,) := \bigvee_{i = 1}^{\lfloor n \, \cdot \, \rfloor} \frac{X_{i}}{a_{n}} 1_{ \big\{ \frac{X_{i}}{a_{n}} > u
    \big\} } \dto M^{(u)}(\,\cdot\,) := \bigvee_{T_{i} \leq \, \cdot} K_{i}^{(u)} \quad \text{as} \ n \to \infty,
 \end{equation}
 in $D[0,1]$ under the $M_{1}$ metric.
Note that the limiting process $M^{(u)}(\,\cdot\,)$ is an extremal process with exponent measure $\nu^{(u)}$, since
$$ \Pr (M^{(u)}(t) \leq x) = \Pr(\widetilde{N}^{(u)}((0,t] \times (x,\infty))=0) = e^{-t\nu^{(u)}(x,\infty)}$$
for $t \in [0,1]$ and $x >0$.

Since the function $\pi \colon D[0,1] \to \mathbb{R}$ defined by $\pi(x)=x(1)$ is continuous (see Theorem 12.5.1 (iv) in Whitt~\cite{Whitt02}), from (\ref{eq:convaboveu}) using the continuous mapping theorem, we obtain
\begin{equation}\label{e:convMu1}
M_{n}^{(u)}(1) \dto M^{(u)}(1) \qquad \textrm{as} \ n \to \infty.
\end{equation}
If we now apply the notation from the proof of Theorem~\ref{t:notimeconv}, we see that $M_{n}^{(u)}(1) = M_{n}(u,\infty)$. Therefore comparing (\ref{e:convTeps})  and (\ref{e:convMu1}) we conclude that $M^{(u)}(1) \eqd M (u,\infty)$. Further, from (\ref{e:convas}) it follows that
$M^{(u)}(1) \dto M$ as $u \to 0$. Therefore taking into account the distribution of $M$ we conclude that
$ e^{-\nu^{(u)}(x,\infty)} \to e^{-\nu(x,\infty)}$
for all $x >0$ that are continuity points of the distribution of $M$, where $\nu(dy) = \theta \alpha
y^{-\alpha-1}1_{(0,\infty)}(y)\,dy$. Hence
\begin{equation}\label{e:convnuu}
\nu^{(u)}(x,\infty) \to \nu(x,\infty) \qquad \textrm{as} \ u \to 0,
\end{equation}
 for every continuity point $x$ of $\nu(\,\cdot, \infty)$.

Now we show that the finite dimensional distributions of $M^{(u)}(\,\cdot\,)$ converge, as $u$ tends to zero, to the finite dimensional distributions of an extremal process $\widetilde{M}(\,\cdot\,)$ generated by a Poisson process $\sum_{i}\delta_{(T_{i},K_{i})}$ with mean measure $\lambda \times \nu$, i.e. $\widetilde{M}(t) = \bigvee_{T_{i} \leq t} K_{i}$, $t \in [0,1]$. Since $M^{(u)}(\,\cdot\,)$ is an extremal process, its finite dimensional distributions are of the form
\begin{eqnarray*}
% \nonumber to remove numbering (before each equation)
   \Pr(M^{(u)}(t_{1}) \leq x_{1}, \ldots, M^{(u)}(t_{k}) \leq x_{k}) & &  \\[0.3em]
   & \hspace*{-25em} = & \hspace*{-12em} e^{-t_{1}\nu^{(u)}(\bigwedge_{i=1}^{k}x_{i},\infty)} \cdot e^{-(t_{2}-t_{1}) \nu^{(u)}(\bigwedge_{i=2}^{k}x_{i},\infty)} \cdot \ldots \cdot e^{-(t_{k}-t_{k-1})\nu^{(u)}(x_{k},\infty)},
\end{eqnarray*}
for $0 \leq t_{1} < t_{2} < \ldots < t_{k} \leq 1$ and positive real numbers $x_{1},\ldots,x_{k}$ (see Resnick~\cite{Resnick86}, Section 2.3). Letting $u \to 0$ and using (\ref{e:convnuu}) we immediately obtain that the right hand side in the last equation above converges (in the continuity points $x_{1}, \ldots, x_{k}$ of $\nu(\,\cdot, \infty)$) to
$$ e^{-t_{1}(\bigwedge_{i=1}^{k}x_{i},\infty)} \cdot e^{-(t_{2}-t_{1}) \nu(\bigwedge_{i=2}^{k}x_{i},\infty)} \cdot \ldots \cdot e^{-(t_{k}-t_{k-1})\nu(x_{k},\infty)}.$$
But since this limit is in fact $\Pr(\widetilde{M}(t_{1}) \leq x_{1}, \ldots, \widetilde{M}(t_{k}) \leq x_{k})$, we conclude that the finite dimensional distributions of $M^{(u)}(\,\cdot\,)$ converge to the finite dimensional distributions of $\widetilde{M}(\,\cdot\,)$ as $u \to 0 $.

Since $\widetilde{M}(\,\cdot\,)$ is constructed from a Poisson process, using its properties one can easily obtain that $\widetilde{M}(\,\cdot\,)$ is a.s. continuous at $t=0$ and $t=1$. In order to obtain $M_{1}$ convergence of  $M^{(u)}(\,\cdot\,)$ to $\widetilde{M}(\,\cdot\,)$ as $u \to 0$, according to Proposition~\ref{pr:Skorohod}, we need only to show (\ref{e:oscM1}), i.e
$$\lim_{\delta \to 0} \limsup_{u \to 0} \Pr ( \omega_{\delta}(M^{(u)}(\,\cdot\,)) > \epsilon ) =0.$$
Note that since $M^{(u)}(\,\cdot\,)$ is increasing, for $t_{1} \leq t \leq t_{2}$ it holds that $M^{(u)}(t_{1}) \leq M^{(u)}(t) \leq M^{(u)}(t_{2})$, which implies $M(M^{(u)}(t_{1}), M^{(u)}(t), M^{(u)}(t_{2})) =0$. Hence $\omega_{\delta}(M^{(u)})=0$, and (\ref{e:oscM1}) holds. Therefore $M^{(u)}(\,\cdot\,) \dto \widetilde{M}(\,\cdot\,)$ in $D[0,1]$ with the $M_{1}$ topology.

So far we obtained $M_{n}^{(u)}(\,\cdot\,) \dto M^{(u)}(\,\cdot\,)$ as $n \to \infty$, and $M^{(u)}(\,\cdot\,) \dto \widetilde{M}(\,\cdot\,)$ as $u \to 0$. If we show
$$ \lim_{u \to 0}\limsup_{n \to \infty} \Pr(d_{M_{1}}(M_{n}(\,\cdot\,),M_{n}^{(u)}(\,\cdot\,)) > \epsilon)=0,$$
for every $\epsilon >0$, then by Theorem 3.5 in Resnick~\cite{Resnick07} we will have, as $n \to \infty$,
$$ M_{n}(\,\cdot\,) \dto \widetilde{M}(\,\cdot\,)$$
in $D[0,1]$ with the $M_{1}$ topology. Take an arbitrary (and
fixed) $\epsilon >0$. Using the fact that the Skorohod $M_{1}$
metric on $D[0,1]$ is bounded above by the uniform metric on
$D[0,1]$ and relation (\ref{e:maxineq}) we obtain
\begin{eqnarray*}
\Pr(d_{M_{1}}(M_{n}(\,\cdot\,),M_{n}^{(u)}(\,\cdot\,)) > \epsilon)
& &\\[0.4em]
& \hspace*{-14em} \leq & \hspace*{-6.4em} \Pr \Big( \sup_{t \in
[0,1]}|M_{n}(t) - M_{n}^{(u)}(t)|
>\epsilon \Big)\\[0.4em]
& \hspace*{-14em} = & \hspace*{-6.4em} \Pr \bigg( \sup_{t \in
[0,1]} \bigg| \bigvee_{j=1}^{\lfloor nt
\rfloor}\frac{X_{j}}{a_{n}} - \bigvee_{j=1}^{\lfloor nt
\rfloor}\frac{X_{j}}{a_{n}} 1_{\big\{ \frac{X_{j}}{a_{n}} >u
\big\}} \bigg| >
\epsilon \Bigg )\\[0.4em]
& \hspace*{-14em} \leq & \hspace*{-6.4em} \Pr \bigg( \sup_{t \in
[0,1]} \bigg| \bigvee_{j=1}^{\lfloor nt
\rfloor}\frac{X_{j}}{a_{n}} 1_{\big\{ \frac{X_{j}}{a_{n}} \leq u
\big\}} \bigg| >
\epsilon \Bigg )\\[0.4em]
& \hspace*{-14em} \leq & \hspace*{-6.4em} \Pr \bigg(
\bigvee_{j=1}^{n}\frac{X_{j}}{a_{n}} 1_{\big\{
\frac{X_{j}}{a_{n}} \leq u \big\}} >
\epsilon \Bigg )\\[0.4em]
\end{eqnarray*}
Note that the last term above is equal to zero for $u \in
(0,\epsilon)$. Hence
$$ \lim_{u \to 0}\limsup_{n \to \infty} \Pr(d_{M_{1}}(M_{n}(\,\cdot\,),M_{n}^{(u)}(\,\cdot\,)) > \epsilon)=0,$$
and this concludes the proof.
%$\hfill\blacksquare$\\[0.2em]
\end{proof}

\begin{rem}\label{r:j1m1}
The $M_{1}$ convergence in Theorem~\ref{t:functconvergence} in
general can not be replaced by the $J_{1}$ convergence. This is
shown in Example~\ref{ex:MMp}.

The problem in our proof if we consider the $J_{1}$ topology is Lemma~\ref{l:contfunct}, which in this case does not hold.
To see this, fix $u>0$ and define
  $$ \eta_{n} = \delta_{(\frac{1}{2}- \frac{1}{n}, 2u)} + \delta_{(\frac{1}{2}, 3u)} \qquad \textrm{for} \  n \geq 3.$$
 Then $\eta_{n} \vto \eta$, where
  $$\eta = \delta_{(\frac{1}{2}, 2u)} + \delta_{(\frac{1}{2}, 3u)}.$$
  For $t_{n} = \frac{1}{2} - \frac{1}{n}$ and every strictly increasing continuous function $\lambda \colon [0,1] \to [0,1]$ such that $\lambda(0)=0$ and $\lambda(1)=1$, we have
  $$ \phi^{(u)}(\eta_{n})(t_{n}) = 2u \qquad \textrm{and} \qquad \phi^{(u)}(\eta) (\lambda(t_{n})) \in \{0, 3u\}.$$
  Therefore for every $n \geq 3$,
  $$ \| \phi^{(u)}(\eta_{n}) - \phi^{(u)}(\eta \circ \lambda)\|_{[0,1]} \geq | \phi^{(u)}(\eta_{n})(t_{n}) - \phi^{(u)}(\eta) (\lambda(t_{n}))| \geq u,$$
  and by the definition of the $J_{1}$ metric $d_{J_{1}}$ (see for example Resnick~\cite{Resnick87}, Section 4.4.1) we obtain
  $$ d_{J_{1}}( \phi^{(u)}(\eta_{n}), \phi^{(u)}(\eta \circ \lambda)) \geq u,$$
  which means that $\phi^{(u)}(\eta_{n})$ does not converge to $\phi^{(u)}(\eta)$ in the $J_{1}$ topology, i.e. the maximum functional $\phi^{(u)}$ is not continuous at $\eta$ with respect to the Skorohod $J_{1}$ topology. Since $\eta \in \Lambda$ we conclude that $\phi^{(u)}$ is not continuous on the set $\Lambda$.

 In our case the $J_{1}$ topology is inappropriate as the partial maxima process may exhibit rapid successions of
jumps within temporal clusters of large values, collapsing in the limit to a single jump. In other words the $J_{1}$ convergence could hold only if extreme values do not cluster. Since our conditions do not prohibit clustering of extremes, the $J_{1}$ convergence fails to hold, and hence the weaker $M_{1}$ topology has to be used.

\end{rem}

\begin{rem}
Theorems \ref{t:notimeconv} and \ref{t:functconvergence} can be extended to real-valued random variables, in the sense that convergence in distribution of $a_{n}^{-1}M_{n}$ and $M_{n}(\,\cdot\,)$ can be derived analogously with the use of absolute values of the variables $X_{i}$, $Z_{ij}$ and $Y_{i}$ in appropriate places. But with the methods used for positive random variables, one can not obtain a explicit form for the distribution of the limits, i.e. the distribution function of $M$ and the exponent measure of $\widetilde{M}(\,\cdot\,)$.
\end{rem}

\section{Examples}
\label{S:Examples}

We give three examples of time series that satisfy all conditions in Theorems~\ref{t:notimeconv} and~\ref{t:functconvergence}, namely joint regular variation property and Conditions~\ref{c:anticluster} and \ref{c:mixcond}. Hence for these processes we obtain convergence of partial maxima $M_{n}$ and functional convergence of partial maxima processes $M_{n}(\,\cdot\,)$. We also identify the distribution of the corresponding limits $M$ and $\widetilde{M}(\,\cdot\,)$ by indicating explicitly the extremal index $\theta$. Recall $\Pr(M \leq x) = e^{-\theta x^{-\alpha}}$ and $\Pr(\widetilde{M}(t) \leq x) = e^{-t\theta x^{-\alpha}}$ for $x>0$ and $ t>0$.

\begin{exmp}\label{ex:MMp} (Moving maxima)
Consider the finite order moving maxima defined by
$$ X_{n} = \max_{i=0,\ldots,m}\{c_{i}Z_{n-i}\}, \qquad n \in
\mathbb{Z},$$
 where $m \in
 \mathbb{N}$, $c_{0}, \ldots, c_{m}$ are nonnegative constants
 such that at least $c_{0}$ and $c_{m}$ are not equal to $0$ and $Z_{i}$, $i \in \mathbb{Z}$, are i.i.d.
  unit Fr\'{e}chet random variables, i.e. $\Pr(Z_{i} \leq x) = e^{-1/x}$ for $x>0$. Hence $Z_{i}$ is regularly varying with index $\alpha=1$. Take a sequence of positive real numbers $(a_{n})$ such that
\begin{equation*}
  n \Pr (Z_{1}>a_{n}) \to 1 \qquad \textrm{as} \ n \to \infty.
\end{equation*}
Then every $X_{i}$ is also regularly varying with index $\alpha=1$.
 Assume also (without loss of generality) that $\sum_{i=0}^{m} c_{i} = 1$. Then
 $ n \Pr (X_{1}>a_{n}) \to 1$ as $n \to \infty$.
 Since the sequence $(X_{n})$ is
 $m$--dependent, it is also strongly mixing, and therefore the
 mixing
 Condition~\ref{c:mixcond} holds. By the same property, for $s >m$ we have
 \begin{eqnarray*}
 % \nonumber to remove numbering (before each equation)
    \Pr \biggl( \max_{s \leq |i| \leq r_{n}} X_{i} > ua_{n}\,\bigg|\,X_{0}>ua_{n} \biggr) &=& \Pr \biggl( \max_{s \leq |i| \leq r_{n}} X_{i} > ua_{n} \biggr) \\[0.5em]
    & \leq & \frac{2r_{n}}{n} \cdot n \Pr(X_{1}>ua_{n}).
 \end{eqnarray*}
 Note that the expression on the right hand side in the above inequality converges to $0$ as $n \to \infty$, and hence Condition~\ref{c:anticluster} also holds. By an application of Theorem 2.3 in Meinguet~\cite{Me12} we obtain that the sequence $(X_{n})$ is jointly regularly varying with index $\alpha=1$. The extremal index of the sequence $(X_{n})$ is given by $\theta = \max_{0 \leq i \leq m} \{c_{i}\}$ (see Ancona-Navarrete and Tawn~\cite{ANT00} and Meinguet~\cite{Me12}).

 In the rest of the example we show that the $M_{1}$ convergence in Theorem~\ref{t:functconvergence} in general can not be replaced by the $J_{1}$ convergence. We use, with appropriate modifications, the procedure of Avram and Taqqu~\cite{AvTa92} in the proof of their Theorem 1. For simplicity take $m=2$. Then we have $X_{n}= \max\{ c_{0}Z_{n}, c_{1}Z_{n-1}\}$ and
 $$ M_{n}(t) = \bigvee_{i=1}^{\lfloor nt
   \rfloor}\frac{X_{i}}{a_{n}}, \qquad t \in [0,1].$$
 By Remark~\ref{r:J1conv} it suffices to prove
 \begin{equation}\label{e:osc1}
 \lim_{\delta \to 0} \limsup_{n \to \infty} \Pr ( \omega_{\delta}'(M_{n}(\,\cdot\,)) > \epsilon ) > 0
 \end{equation}
 for some $\epsilon >0$.

 Assume $c_{1} > c_{0}$. Let $i'=i'(n)$ be the index at which $\max_{1 \leq i \leq n-1}Z_{i}$ is obtained. Fix $\epsilon >0$ and introduce the events
 $$A_{n,\epsilon} = \{ Z_{i'} > \epsilon a_{n} \} = \Big\{ \max_{1 \leq i \leq n-1}Z_{i} > \epsilon a_{n}\Big\}$$
 and
 $$ B_{n,\epsilon} = \{Z_{i'}>\epsilon a_{n} \ \textrm{and} \ \exists\,l
 \neq 0, -i' \leq l \leq 1, \ \textrm{such that} \ Z_{i'+l} > \lambda \epsilon a_{n} \},$$
 where $\lambda = c_{0} / (2c_{1})$. Using the facts that $(Z_{i})$ is an i.i.d. sequence and $n
 \Pr(Z_{1}>\epsilon a_{n}) \to 1/\epsilon$ as $n \to \infty$
 (which follows from the regular variation property of $Z_{1}$) we obtain
 \begin{equation}\label{e:limAn}
  \lim_{n \to \infty}\Pr(A_{n,\epsilon}) = \lim_{n \to \infty} [1- (1-\Pr(Z_{1}> \epsilon
 a_{n})^{n-1}] = 1 - e^{-1/\epsilon}. \end{equation}
 Observe that
 $$ B_{n,\epsilon} \subseteq \bigcup_{i=1}^{n-1} \bigcup_{\footnotesize \begin{array}{c}
                                l=-(n-1) \\
                                l \neq 0
                              \end{array}}^{1}
 \{ Z_{i} > \epsilon a_{n}, Z_{i+l} > \lambda \epsilon a_{n} \}.$$
 Then it holds that
 \begin{equation}\label{e:limBn} \Pr(B_{n,\epsilon})  \leq  (n-1)n \Pr(Z_{1}>\epsilon a_{n}) \Pr(Z_{1}> \lambda \epsilon a_{n}) \to \frac{1}{\lambda \epsilon^{2}} \\
 \end{equation}
as $n \to \infty$.

On the event $A_{n,\epsilon} \setminus B_{n,\epsilon}$ one has $Z_{i'+l} \leq \lambda \epsilon a_{n}$ for every $l \neq 0$, $-i' \leq l \leq 1$, so that
$$ \bigvee_{j=1}^{i'-1}\frac{X_{j}}{a_{n}} = \max\bigg\{ c_{0} \bigvee_{j=1}^{i'-1}\frac{Z_{j}}{a_{n}}, c_{1} \bigvee_{j=0}^{i'-2}\frac{Z_{j}}{a_{n}} \bigg\} \leq c_{1} \lambda \epsilon = \frac{c_{0} \epsilon}{2}$$
and
$$ \bigvee_{j=1}^{i'}\frac{X_{j}}{a_{n}} = \max\bigg\{ \bigvee_{j=1}^{i'-1}\frac{X_{j}}{a_{n}}, \frac{X_{i'}}{a_{n}} \bigg\} \geq \frac{X_{i'}}{a_{n}} = \max \bigg\{ c_{0} \frac{Z_{i'}}{a_{n}}, c_{1} \frac{Z_{i'-1}}{a_{n}} \bigg\} \geq c_{0} \frac{Z_{i'}}{a_{n}} \geq c_{0} \epsilon.$$
Therefore
\begin{equation}\label{e:inc1}
\Big| M_{n} \Big( \frac{i'}{n} \Big) - M_{n} \Big( \frac{i'-1}{n} \Big) \Big| = \bigg| \bigvee_{j=1}^{i'}\frac{X_{j}}{a_{n}} - \bigvee_{j=1}^{i'-1}\frac{X_{j}}{a_{n}} \bigg| \geq c_{0}\epsilon - \frac{c_{0}\epsilon}{2} = \frac{c_{0}\epsilon}{2}.
\end{equation}
 On the event  $A_{n,\epsilon} \setminus B_{n,\epsilon}$ one also have
 $$  \bigvee_{j=1}^{i'}\frac{X_{j}}{a_{n}} = \frac{X_{i'}}{a_{n}} = c_{0}\frac{Z_{i'}}{a_{n}}$$
 and
 $$ \bigvee_{j=1}^{i'+1}\frac{X_{j}}{a_{n}} \geq \frac{X_{i'+1}}{a_{n}} = \max\bigg\{ c_{0} \frac{Z_{i'+1}}{a_{n}}, c_{1} \frac{Z_{i'}}{a_{n}} \Bigg\} \geq c_{1}\frac{Z_{i'}}{a_{n}},$$
 which imply
 \begin{eqnarray}\label{e:inc2}
  \nonumber \Big| M_{n} \Big( \frac{i'+1}{n} \Big) - M_{n} \Big( \frac{i'}{n} \Big) \Big| & = & \bigg| \bigvee_{j=1}^{i'+1}\frac{X_{j}}{a_{n}} - \bigvee_{j=1}^{i'}\frac{X_{j}}{a_{n}} \bigg|\\[0.4em]
  & \geq & (c_{1}-c_{0}) \frac{Z_{i'}}{a_{n}} = (c_{1}-c_{0}) \epsilon.
 \end{eqnarray}
 From (\ref{e:inc1}) and (\ref{e:inc2}) we obtain
 \begin{eqnarray*}\label{e:inc3}
 \nonumber \omega_{2/n}'(M_{n}(\,\cdot\,)) & \geq & \min \Big\{ \Big| M_{n} \Big(\frac{i'}{n} \Big) - M_{n} \Big( \frac{i'-1}{n} \Big) \Big|, \Big| M_{n} \Big(\frac{i'+1}{n} \Big) - M_{n} \Big( \frac{i'}{n} \Big) \Big| \Big\}\\[0.4em]
 & \geq & \epsilon\,\min \Big\{ \frac{c_{0}}{2}, c_{1}-c_{0} \Big\} >0
 \end{eqnarray*}
 on the event $A_{n,\epsilon} \setminus B_{n,\epsilon}$. Therefore, since $\omega_{\delta}'(\,\cdot\,)$ is nondecreasing in $\delta$, it holds that
 \begin{eqnarray*}
  \liminf_{n \to \infty} \Pr(A_{n,\epsilon} \setminus B_{n,\epsilon}) & \leq & \liminf_{n \to \infty}
 \Pr (\omega_{2/n}' (M_{n}(\,\cdot\,)) \geq  \epsilon\,\min \{ c_{0}/2, c_{1}-c_{0} \})\\[0.4em]
 & \leq &   \lim_{\delta \to 0} \limsup_{n \to \infty}  \Pr (\omega_{\delta}' (M_{n}(\,\cdot\,)) \geq  \epsilon\,\min \{ c_{0}/2, c_{1}-c_{0} \}).
 \end{eqnarray*}
   Hence if we prove $\liminf_{n \to \infty} \Pr(A_{n,\epsilon} \setminus B_{n,\epsilon}) >0$ for some $\epsilon >0$, then (\ref{e:osc1}) will hold, and this will exclude the $J_{1}$ convergence.
Since $x^{2}(1-e^{-1/x})$ tends to infinity as $x \to \infty$, we can find $\epsilon >0$ such that  $\epsilon^{2}(1-e^{-1/ \epsilon}) > 1/ \lambda$, i.e.
 $$ 1-e^{-1/ \epsilon} > \frac{1}{\lambda \epsilon^{2}}.$$
 Hence, taking into account relations (\ref{e:limAn}) and (\ref{e:limBn}) we obtain
 $$\lim_{n \to \infty} \Pr(A_{n,\epsilon}) > \limsup_{n \to \infty} \Pr(B_{n,\epsilon}),$$
 and from this immediately follows
 $$  \liminf_{n \to \infty} \Pr(A_{n,\epsilon} \setminus B_{n,\epsilon}) \geq \lim_{n \to \infty}\Pr(A_{n,\epsilon}) - \limsup_{n \to \infty} \Pr(B_{n,\epsilon}) >0.$$
 Therefore the $J_{1}$ convergence does not hold.
\end{exmp}

\smallskip

\begin{exmp} (squared GARCH process)
 We consider a stationary squared GARCH(1,1) process $(X_{n}^{2})$, where
 $$X_{n}=\sigma_{n} Z_{n},$$
with $(Z_{n})$ being a sequence of i.i.d.\ random variables with $\E (Z_{1}) = 0$ and $\operatorname{var}(Z_{1}) = 1$, and
\begin{equation}\label{e:stochvol}
    \sigma_{n}^{2} = \alpha_{0} + (\alpha_{1} Z_{n-1}^{2} +
    \beta_{1}) \sigma_{n-1}^{2},
\end{equation}
with positive parameters $\alpha_{1}, \beta_{1}$ and $\alpha_{0}$. Assume that
\begin{equation*}
-\infty \leq \E \ln (\alpha_{1}Z_{1}^{2} + \beta_{1}) < 0.
\end{equation*}
Then there exists a strictly stationary solution to the stochastic recurrence equation (\ref{e:stochvol}); see Goldie~\cite{Goldie91} and Mikosch and St\u{a}ric\u{a}~\cite{MiSt00}. The process
$(X_{n})$ is then strictly stationary too.

Assume that $Z_{1}$ is symmetric, has a positive Lebesgue density on $\mathbb{R}$ and there exists $\alpha >0$ such that
\begin{equation*}
  \E [(\alpha_{1}Z_{1}^{2} + \beta_{1})^{\alpha}]=1
  \quad \textrm{and} \quad
  \E [(\alpha_{1}Z_{1}^{2} + \beta_{1})^{\alpha} \ln (\alpha_{1}Z_{1}^{2} + \beta_{1})] < \infty.
\end{equation*}
Then it is known that the processes $(\sigma_{n}^{2})$ and
$(X_{n}^{2})$ are jointly regularly varying with index $\alpha$ and
strongly mixing with geometric rate (see Basrak et al.~\cite{BDM02b}; Mikosch and St\u{a}ric\u{a}~\cite{MiSt00}). Therefore
the sequence $(X_{n}^{2})$ satisfies Condition \ref{c:mixcond}.
Condition~\ref{c:anticluster} for the sequence $(X_{n}^{2})$
follows immediately from the results in Basrak et al.~\cite{BDM02b}. The extremal index of the sequence $(X^{2}_{n})$ is given by
$$\theta = \lim_{k \to \infty} \E \bigg( |Z_{1}|^{2\alpha} - \max_{j=2,\ldots,k+1} \Big| Z_{j}^{2} \prod_{i=1}^{j}(\alpha_{1}Z_{i-1}^{2}+\beta_{1}) \Big|^{\alpha} \bigg)_{+}\,\Big/ \E |Z_{1}|^{2 \alpha}$$
 (see Mikosch and St\u{a}ric\u{a}~\cite{MiSt00}).
\end{exmp}

\begin{exmp} (ARMAX process)
The ARMAX process is defined by
\begin{equation}\label{e:armax}
 X_{n} = \max\{cX_{n-1}, Z_{n}\}, \qquad n \in \mathbb{Z},
\end{equation}
where $0 < c < 1$ and $(Z_{n})$ is a sequence of i.i.d. random variables with
  unit Fr\'{e}chet distribution. According to Proposition 2.2 in Davis and Resnick~\cite{DaRe89} the unique stationary solution to (\ref{e:armax}) is given by
  $$X_{n} = \bigvee_{i=0}^{\infty}c^{i}Z_{n-i}.$$
  The process $(X_{n})$ is strongly mixing (see for example Ferreira and Ferreira~\cite{FeFe12}, Proposition 3.1) and therefore Condition~\ref{c:mixcond} holds. The joint regular variation property and Condition~\ref{c:anticluster} for the process $(X_{n})$ can be obtained by an application of Theorem 2.3 and Theorem 2.4 in Meinguet~\cite{Me12}. The extremal index of the sequence $(X_{n})$ is given by $\theta = 1-c$ (see Ancona-Navarrete and Tawn~\cite{ANT00} and Ferreira and Ferreira~\cite{FeFe12}).
\end{exmp}

%\bibliographystyle{amsplain}
%%
% requires a BiBTeX file sample.bib
%\bibliography{sample}

\end{document}